\documentclass{amsart}
\usepackage{amsfonts,amssymb,amsmath,amsthm}
\usepackage{url}
\usepackage{enumerate}

\urlstyle{sf}
\newtheorem{thrm}{Theorem}[section]
\newtheorem{lem}[thrm]{Lemma}

\newtheorem{cor}[thrm]{Corollary}
\theoremstyle{definition}

\newtheorem{remark}[thrm]{Remark}
\numberwithin{equation}{section}

\newcommand{\C}{\mathbb{C}}
\newcommand{\R}{\mathbb{R}}
\newcommand{\Z}{\mathbb{Z}}

\newcommand{\TT}{\mathbb{T}}
\newcommand{\m}{\mathrm{m}}

\author{Zahraa Issa, Matilde Lal\'{i}n}

 \address{Zahraa Issa: D\'epartement de math\'ematiques et de statistique,
                                    Universit\'e de Montr\'eal.
                                    CP 6128, succ. Centre-ville.
                                     Montreal, QC H3C 3J7, Canada} \email{issaz@dms.umontreal.ca}

\address{Matilde Lal\'in:  D\'epartement de math\'ematiques et de statistique,
                                    Universit\'e de Montr\'eal.
                                    CP 6128, succ. Centre-ville.
                                     Montreal, QC H3C 3J7, Canada}\email{mlalin@dms.umontreal.ca}
\thanks{This work was supported by NSERC Discovery Grant 355412-2008, FQRNT Subvention \'etablissement
de nouveaux chercheurs 144987, and a start-up grant from the Universit\'e de Montr\'eal.}

\keywords{Mahler measure, polynomial}

\subjclass[2010]{Primary 11R06; Secondary 11R09}

\begin{document}
\title{A generalization of a theorem of Boyd and Lawton}

\begin{abstract} The Mahler measure of a nonzero $n$-variable polynomial $P$ is the integral of
$\log|P|$ on the unit $n$-torus. A result of Boyd and Lawton says that the Mahler measure of a multivariate polynomial is the limit of Mahler measures of univariate polynomials. We prove the analogous
result for different extensions of Mahler measure such as generalized Mahler measure (integrating the maximum of $\log|P|$ for possibly different $P$'s), 
multiple Mahler measure (involving products of $\log|P|$ for possibly different $P$'s), and higher Mahler measure (involving $\log^k|P|$).
 
\end{abstract}
\maketitle

\section{Introduction}

The Mahler measure of a nonzero polynomial $P(x_1,\dots,x_n) \in \C[x_1,\dots,x_n]$ is defined by 
\[
\m(P) :=\frac{1}{(2\pi i)^n}\int_{\TT^n} 
\log \left|P(x_1,\dots,x_n)\right| \frac{dx_1}{x_1}\cdots \frac{dx_n}{x_n},\]
where $\TT^n=\{ (z_1,\dots,z_n) \in \C^n : |z_1|=\dots=|z_n|\}$ is the unit torus in dimention $n$. 
This formula has a particularly simple expression for univariate polynomials. If $P(x) = a \prod_i(x-\alpha_i)$, then Jensen's formula implies that 
$\m(P)= \log |a| + \sum_i \max \{0,\log|\alpha_i|\}$. In fact, Lehmer \cite{Le} considered first the measure for univariate polynomials which was later extended 
to multivariate polynomials by Mahler \cite{Mah}. Lehmer's motivation for considering this object was a method to construct large prime numbers that generalizes Mersenne's sequence.
Mahler, on the other hand, was interested in relating heights of products of polynomials with the heights of the factors. The Mahler measure is a height which is multiplicative, and therefore it was a natural object for Mahler to consider. 

Boyd and Lawton  proved the following useful and interesting result.
\begin{thrm}\cite{B, B2, L}\label{Thm}
Let $P(x_1,\dots,x_n) \in \C[x_1,\dots,x_n]$ and ${\bf r} =
(r_1,\dots,r_n),\,r_i \in \Z_{>0}.$  Define $P_{{\bf r}}(x)$ as
\[P_{{\bf r}}(x) = P(x^{r_1},\dots,x^{r_n}),\] and let
\[q({\bf r}) = \min \left \{H({\bf t}): \,{\bf t} = (t_1,\dots,t_n) \in \Z^n,\,
{\bf t} \neq (0,\dots,0), \sum_{j=1}^nt_jr_j = 0 \right\},\]
where $H({\bf t}) = \max\{|t_j|:\,1\leq j \leq n\}$.  Then
\[\lim_{q({\bf r})\to \infty}\m(P_{{\bf r}}) = \m(P).\]
\end{thrm}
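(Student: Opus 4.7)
The plan is to reinterpret $\m(P_{\mathbf{r}})$ as the integral of $\log|P|$ against a family of probability measures $\mu_{\mathbf{r}}$ on $\TT^n$, each supported on a single closed curve, and to show that $\mu_{\mathbf{r}}$ converges weakly to the normalized Haar measure $\mu_{\TT^n}$ as $q(\mathbf{r})\to\infty$. Because $\log|P|$ has logarithmic singularities on $\{P=0\}$, a truncation argument will be required to conclude. Concretely, parametrize $\TT$ by $x=e^{2\pi i\theta}$ with $\theta\in[0,1]$ and write
\[
\m(P_{\mathbf{r}}) = \int_0^1 \log\bigl|P(e^{2\pi ir_1\theta},\dots,e^{2\pi ir_n\theta})\bigr|\,d\theta = \int_{\TT^n}\log|P|\,d\mu_{\mathbf{r}},
\]
where $\mu_{\mathbf{r}}$ is the pushforward of Lebesgue measure on $[0,1]$ under $\theta\mapsto(e^{2\pi ir_1\theta},\dots,e^{2\pi ir_n\theta})$.

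The first step is weak convergence of $\mu_{\mathbf{r}}$. For each $\mathbf{k}\in\Z^n$,
\[
\int_{\TT^n} x_1^{k_1}\cdots x_n^{k_n}\,d\mu_{\mathbf{r}} = \int_0^1 e^{2\pi i\langle\mathbf{k},\mathbf{r}\rangle\theta}\,d\theta
\]
equals $1$ when $\langle\mathbf{k},\mathbf{r}\rangle=0$ and $0$ otherwise. For any trigonometric polynomial $f$ of total degree at most $d$, the condition $q(\mathbf{r})>d$ forces $\langle\mathbf{k},\mathbf{r}\rangle\ne 0$ for every nonzero Fourier mode $\mathbf{k}$ of $f$, so $\int f\,d\mu_{\mathbf{r}}$ reduces to the constant term and equals $\int_{\TT^n} f\,d\mu_{\TT^n}$. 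By Stone--Weierstrass (equivalently, approximation by Fej\'er means), this extends to every continuous $f$ on $\TT^n$, so $\mu_{\mathbf{r}}\to\mu_{\TT^n}$ weakly.

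The second step handles the singularities of $\log|P|$ by truncation. For $\epsilon>0$, the function $f_\epsilon:=\log\max(|P|,\epsilon)$ is continuous on $\TT^n$ and decreases pointwise to $\log|P|$ as $\epsilon\downarrow 0$; Step~1 gives $\int f_\epsilon\,d\mu_{\mathbf{r}}\to\int_{\TT^n}f_\epsilon$, and monotone convergence gives $\int_{\TT^n}f_\epsilon\to\m(P)$. Writing
\[
\int\log|P|\,d\mu_{\mathbf{r}} = \int f_\epsilon\,d\mu_{\mathbf{r}} - E_{\mathbf{r}}(\epsilon),\qquad E_{\mathbf{r}}(\epsilon):=\int_{\{|P|<\epsilon\}}\bigl(\log\epsilon-\log|P|\bigr)\,d\mu_{\mathbf{r}},
\]
the theorem reduces to proving that $E_{\mathbf{r}}(\epsilon)\to 0$ as $\epsilon\to 0$, \emph{uniformly in $\mathbf{r}$}.

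The uniform control of $E_{\mathbf{r}}(\epsilon)$ is the technical heart of the argument and I expect it to be the main obstacle. Since $P_{\mathbf{r}}$ is a univariate Laurent polynomial, $E_{\mathbf{r}}(\epsilon)=\int_{I_\epsilon}(\log\epsilon-\log|P_{\mathbf{r}}(e^{2\pi i\theta})|)\,d\theta$ where $I_\epsilon=\{\theta:|P_{\mathbf{r}}(e^{2\pi i\theta})|<\epsilon\}$. The plan is to exploit the fact that for $q(\mathbf{r})$ larger than twice the degree of $P$, the nonzero Fourier coefficients of $P_{\mathbf{r}}$ are a reindexing of those of $P$, so its $L^2$ norm and coefficient sums are bounded in terms of $P$ alone. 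Combined with a Remez- or Tur\'an-type inequality for trigonometric polynomials, this should yield a bound $|I_\epsilon|\le C(P)\,\epsilon^\alpha$ for some $\alpha>0$ independent of $\mathbf{r}$; together with $\int_0^\epsilon(-\log t)\,dt=O(\epsilon|\log\epsilon|)$, this gives $E_{\mathbf{r}}(\epsilon)\to 0$ uniformly in $\mathbf{r}$ and completes the proof.
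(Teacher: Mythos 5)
Your overall architecture is exactly the one used by Lawton (and followed in this paper's proof of Theorem \ref{superBL}): reduce to a continuous truncation of $\log|P|$, handle that part by Weierstrass/Fourier approximation (your Step 1 is correct --- once $q(\mathbf{r})$ exceeds the degree of a trigonometric polynomial, integration over the curve and over $\TT^n$ agree), and then control the contribution of the sublevel set $\{|P_{\mathbf{r}}|<\epsilon\}$ uniformly in $\mathbf{r}$. The decomposition into $f_\epsilon$ and $E_{\mathbf{r}}(\epsilon)$, and the reduction of the theorem to a bound of the form $|I_\epsilon|\le C\,\epsilon^\alpha$ with $C,\alpha$ independent of $\mathbf{r}$, are all sound, as is the observation that for $q(\mathbf{r})$ large the coefficients of $P_{\mathbf{r}}$ are a reindexing of those of $P$.

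The genuine gap is the step you yourself flag as the ``technical heart'': the uniform sublevel-set estimate is not something you can expect to extract from a Remez- or classical Tur\'an-type inequality as stated. The Remez inequality for (trigonometric) polynomials of degree $N$ carries constants that degrade as $N\to\infty$, and $\deg P_{\mathbf{r}}$ is unbounded as $q(\mathbf{r})\to\infty$; the classical Tur\'an lemma for sparse exponential sums applies to intervals, whereas $I_\epsilon$ is a union of a number of intervals that also grows with the degree. What is actually needed --- and what carries essentially all of the difficulty of the Boyd--Lawton theorem --- is Lawton's Theorem 1 (Theorem \ref{thm1L} above): for a \emph{monic} univariate polynomial with $k\ge 2$ nonzero coefficients, $\mu_1(\{|P(z)|<y\})\le C_k y^{1/(k-1)}$ with $C_k$ depending \emph{only} on $k$. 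This is the main theorem of Lawton's paper, not an off-the-shelf corollary; the variant of Tur\'an's lemma that would suffice (a sup-norm comparison over arbitrary measurable sets with constants depending only on the number of terms, i.e.\ a Tur\'an--Nazarov-type inequality) is itself a substantial result. So your proposal correctly isolates where the work lies but does not close it: as written, the bound $|I_\epsilon|\le C(P)\epsilon^\alpha$ uniform in $\mathbf{r}$ is asserted rather than proved, and the tools you name do not deliver it without essentially reproving Lawton's lemma. Once that lemma is granted, your $E_{\mathbf{r}}(\epsilon)$ is bounded exactly as in Lemma \ref{Lemma} (the case $\ell=1$), and the rest of your argument goes through.
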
 
This result implies that the multivariate Mahler measure is a limit of univariate Mahler measures. In particular, it gives evidence that the extension to multivariate polynomials is the right generalization. 

The Mahler measure of multivariate polynomials often yields special values of the Riemann zeta function and $L$-functions, thus one can construct sequences of numbers that approach these special values in this way. 

In addition, this theorem has consequences in terms of limit points of Mahler measure. The most famous open question in this area is the so called Lehmer's question.
{\em Is there a constant $c > 0$ such that for every polynomial $P \in \Z[x]$ with $m(P) >0$, then $m(P)\geq c$?} Thus, Theorem \ref{Thm} tells us that given a multivariate polynomial whose measure is smaller than a certain constant $c$, we can generate infinitely many univariate polynomials with the same
property. 

In this work, we are going to consider two extensions of Mahler measure. 

Given $P_1,\dots, P_s \in \C[x_1, \dots, x_n],$ (not necessarily distinct) nonzero polynomials, the {\em generalized Mahler measure} is defined in \cite{GO} by 
\[\m_{\max}(P_1,\dots,P_s):=\frac{1}{(2\pi i)^n}\int_{\TT^n}\max\{\log|P_1(x_1,\dots,x_n)|,\dots
, \log|P_s(x_1,\dots,x_n)|\} \frac{dx_1}{x_1}\cdots \frac{dx_n}{x_n}.\]
On the other hand, the {\em multiple  Mahler measure} is defined in \cite{KLO} by
\[\m(P_1,\dots, P_s) :=\frac{1}{(2\pi i)^n}\int_{\TT^n}\log|P_1(x_1, \dots, x_n)|\cdots
\log|P_s(x_1, \dots, x_n)| \frac{dx_1}{x_1}\cdots \frac{dx_n}{x_n}.\]
For the particular case in which $P_1=\dots=P_s=P$, the multiple Mahler meausure is called {\em higher Mahler measure}
\[\m_s(P) :=\frac{1}{(2\pi i)^n}\int_{\TT^n}\log^s|P(x_1, \dots, x_n)|\frac{dx_1}{x_1}\cdots \frac{dx_n}{x_n}.\]

These objects have been related to special values of the Riemann zeta function and $L$-functions (\cite{GO,L08} for generalized Mahler measure, \cite{KLO, Sa10, Sa, BS, BBSW} for multiple Mahler measure), but the nature of this relationship is less well understood than in the classical case. 

Our goal in this note is to prove the equivalent for Theorem \ref{Thm} for these generalizations.

\begin{thrm}\label{superBL} Let $P_1,\dots,P_s \in \C[x_1,\dots,x_n],$  and ${\bf
r}$ as before. Then
\begin{enumerate}
\item \[\lim_{q({\bf r})\to \infty}\m_{\max}({P_1}_{{\bf r}},\dots, {P_s}_{{\bf r}}) =
\m_{\max}(P_1,\dots,P_s).\]
\item  \[\lim_{q({\bf r})\to \infty}\m({P_1}_{{\bf r}},\dots, {P_s}_{{\bf r}}) =
\m(P_1,\dots,P_s).\]
\item If $P_1=\dots=P_s=P$, 
\[\lim_{q({\bf r})\to \infty}\m_s({P}_{{\bf r}})=\m_s(P).\] 
 \end{enumerate}
\end{thrm}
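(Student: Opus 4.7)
The plan is to follow the blueprint of the proof of Theorem \ref{Thm}: combine (i) a Weyl-type equidistribution statement for continuous test functions on $\TT^n$ with (ii) a truncation argument that absorbs the logarithmic singularities via a uniform $L^p$ estimate on $\log|P_{\mathbf{r}}|$. The same two-step pattern handles all three parts at once; the differences between (1), (2) and (3) enter only through the pointwise truncation bound. Throughout, write $\phi_{\mathbf{r}}:\TT^1\to\TT^n$ for the map $\phi_{\mathbf{r}}(x):=(x^{r_1},\ldots,x^{r_n})$.

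\textbf{Equidistribution.} For any nonzero $\mathbf{t}\in\Z^n$ with $H(\mathbf{t})<q(\mathbf{r})$, the definition of $q(\mathbf{r})$ forces $\sum_j t_j r_j\neq 0$, so the monomial $\mathbf{y}^{\mathbf{t}}$ pulls back under $\phi_{\mathbf{r}}$ to the nontrivial character $x^{\sum_j t_j r_j}$, which has mean zero on $\TT^1$. Approximating $F\in C(\TT^n)$ uniformly by trigonometric polynomials yields
\[
\frac{1}{2\pi i}\int_{\TT^1} F\circ\phi_{\mathbf{r}}\,\frac{dx}{x}\;\longrightarrow\;\frac{1}{(2\pi i)^n}\int_{\TT^n} F\,\frac{dx_1}{x_1}\cdots\frac{dx_n}{x_n}
\]
as $q(\mathbf{r})\to\infty$, and a sandwich between continuous upper and lower envelopes extends this to any bounded $F$ which is continuous almost everywhere.

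\textbf{Truncation.} For $F^{(1)}:=\max_i\log|P_i|$, $F^{(2)}:=\prod_i\log|P_i|$, and $F^{(3)}:=\log^s|P|$, set $F_N:=\max(\min(F,N),-N)$; in case (1) only the lower truncation is active because $|P_i|\le\sum_{\mathbf{i}}|a_{\mathbf{i}}|$ on $\TT^n$ makes $\log^+|P_i|$ bounded. Each $F_N$ is bounded and continuous off $\bigcup_i\{P_i=0\}$, a set of measure zero, so the equidistribution step gives $\int_{\TT^1}F_N\circ\phi_{\mathbf{r}}\to\int_{\TT^n}F_N$. The pointwise errors satisfy
\[
\bigl|F^{(1)}-F_N^{(1)}\bigr|\le\bigl|\log|P_1|\bigr|\,\mathbf{1}_{\{|P_1|<e^{-N}\}},\qquad \bigl|F^{(3)}-F_N^{(3)}\bigr|\le\bigl|\log|P|\bigr|^s\,\mathbf{1}_{\{|\log|P||>N^{1/s}\}},
\]
and, applying AM-GM to convert the product into a sum of $s$-th powers,
\[
\bigl|F^{(2)}-F_N^{(2)}\bigr|\le\frac{1}{s}\sum_{i=1}^s\bigl|\log|P_i|\bigr|^s\,\mathbf{1}_{\{\max_j|\log|P_j||>N^{1/s}\}}.
\]
H\"older and Chebyshev then reduce all three bounds to a uniform estimate $\sup_{\mathbf{r}}\|\log|P_{i,\mathbf{r}}|\|_{L^p(\TT^1)}\le C(P_i,p)$ for some $p>s$; granted this, both $\int|F-F_N|\circ\phi_{\mathbf{r}}\,dx$ and $\int_{\TT^n}|F-F_N|$ tend to $0$ as $N\to\infty$ uniformly in $\mathbf{r}$. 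An $\epsilon/3$ triangle inequality combines the two layers and yields the three limits.

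\textbf{Main obstacle.} Everything hinges on the uniform $L^p$ bound on $\log|P_{\mathbf{r}}|$. The positive part is immediate from $|P(\mathbf{y})|\le\sum_{\mathbf{i}}|a_{\mathbf{i}}|$; the real work is in the negative part, equivalently a uniform-in-$\mathbf{r}$ bound on $\bigl|\{x\in\TT^1:|P_{\mathbf{r}}(x)|<\delta\}\bigr|$ as $\delta\to 0$. This generalises the technical heart of the proof of Theorem \ref{Thm}, where only $p=1$ (uniform integrability of $\log|P_{\mathbf{r}}|$) is required; the expected route is to adapt the same control of the interaction between the zero locus of $P$ on $\TT^n$ and the one-parameter family of curves $\phi_{\mathbf{r}}(\TT^1)$ to arbitrary $p\ge 1$, after which cases (2) and (3) fall into line with case (1).
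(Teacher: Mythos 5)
Your overall architecture is the right one and matches the paper's in spirit: approximate by a function to which Weyl/Weierstrass equidistribution applies, and control the error coming from the neighbourhood of the zero locus by an estimate that is uniform in $\mathbf{r}$. The cosmetic difference is that you truncate vertically ($F_N=\max(\min(F,N),-N)$ plus a Portmanteau argument for bounded a.e.-continuous functions), whereas the paper cuts off spatially with a continuous bump $g_y$ supported where $|P_i|\geq y/2$, so that $g_yF$ is genuinely continuous and plain Weierstrass approximation suffices. Either device works.

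The genuine gap is that the one claim carrying all the content --- the uniform-in-$\mathbf{r}$ bound $\sup_{\mathbf{r}}\|\log|P_{i,\mathbf{r}}|\|_{L^p(\TT^1)}\leq C(P_i,p)$, equivalently a uniform bound on $\mu_1(\{|P_{i,\mathbf{r}}|<\delta\})$ --- is stated as an expectation (``the expected route is to adapt the same control'') rather than proved. This is exactly the step the paper spends Sections 2 and 3 on: Lawton's Theorem \ref{thm1L} gives $\mu_1(\{|P|<y\})\leq C_k y^{1/(k-1)}$ with $C_k$ depending \emph{only on the number $k$ of nonzero coefficients}, which is constant along the family $P_{i,\mathbf{r}}$ once $q(\mathbf{r})$ is large enough that no monomials of $P_i$ collide and the coefficients can be normalized to have modulus $\geq 1$. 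From this sublevel-set bound your $L^p$ estimate does follow for every $p$ (by the layer-cake formula, $\int_{\TT}|\log|P_{\mathbf{r}}||^p\mathbf{1}_{\{|P_{\mathbf{r}}|<1\}}\leq\int_0^\infty pt^{p-1}\min(2\pi,C_ke^{-t/(k-1)})\,dt$), and the paper's Lemmas on $\int_{S(P,y)}\log^{\ell}|P|$ together with the generalized H\"older inequality (Lemma \ref{superHolder}) are precisely the packaged form of the H\"older--Chebyshev reductions you invoke for the product and max cases (Corollaries \ref{cuplimit} and \ref{caplimit}). So the gap is fillable by the tool you gesture at, but as written the proposal defers the entire technical heart of the theorem; you should also note explicitly that the stability of the coefficient count under $\mathbf{r}\mapsto P_{i,\mathbf{r}}$ is what converts Lawton's estimate into uniformity in $\mathbf{r}$, and that finiteness of the limiting multivariate integrals (the paper's Theorem \ref{existence}, resting on the Everest--Ward bound) is needed to make $\int_{\TT^n}|F-F_N|\to 0$ meaningful.
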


\section{Some preliminary results}
The difficulty in obtaining Theorem \ref{superBL} lies in the case where (some of) the polynomials vanish in the domain of integration and the logarithm is not bounded. This problem already appears 
in the proof of Theorem \ref{Thm}. The key result for solving this is a theorem by Lawton \cite{L}. 

Let $\mu_n$ denote the Lebesgue measure in the torus $\TT^n$. 
\begin{thrm}[\cite{L}, Theorem 1] \label{thm1L} Let $P(x)\in \C[x]$ be a monic polynomial  and let $k=$ number of nonzero coefficients of $P$. Then if $k\geq 2$, there is a positive
constant $C_k$ that depends only on $k$ such that 
\[\mu_1(\{z \in \TT \, :\, |P(z)|<y\})\leq C_ky^\frac{1}{k-1},\]
for any real number $y>0$. 
\end{thrm}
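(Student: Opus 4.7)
The plan is to exploit that $P$ has only $k$ nonzero coefficients in an essential way. Write $P(z) = \sum_{j=1}^k a_j z^{d_j}$ with $0 \leq d_1 < \cdots < d_k$ and $a_k = 1$; by dividing through by $z^{d_1}$ (which has modulus $1$ on $\TT$) we may further assume $d_1 = 0$. Set $E_y = \{z\in\TT : |P(z)|<y\}$ and $\mu = \mu_1(E_y)$; the goal is $\mu \leq C_k\, y^{1/(k-1)}$.

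The central tool is a Vandermonde interpolation identity. For any $k$ distinct points $z_1,\ldots,z_k \in \TT$, the linear system $\sum_{j=1}^k a_j z_i^{d_j} = P(z_i)$ can be solved by Cramer's rule. Since $a_k = 1$, expanding the relevant Cramer determinant along the $k$-th column and applying Hadamard's inequality to the resulting $(k-1)\times(k-1)$ minors (whose entries have modulus one) yields the upper estimate
\[
|\det(z_i^{d_j})_{i,j}| \leq k\,(k-1)^{(k-1)/2}\,\max_{1 \leq i \leq k} |P(z_i)|.
\]
If the $z_i$ are chosen in $E_y$, the right-hand side is at most $k(k-1)^{(k-1)/2}\,y$. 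The second ingredient is a pigeonhole selection: partition $\TT$ into arcs of length of order $\mu/(2k)$; since $E_y$ has total measure $\mu$, it meets enough arcs that $k$ of them can be chosen pairwise non-adjacent, yielding $k$ points in $E_y$ with pairwise angular separation at least $\delta \sim \mu/k$.

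The desired estimate would then follow from a matching lower bound $|\det(z_i^{d_j})| \gtrsim c_k\, \delta^{k-1}$, and this is the principal obstacle. Unlike the classical case where $\det(z_i^{j-1}) = \prod_{i<j}(z_j-z_i)$ and the minimal pairwise distance controls the determinant directly, the lacunary Vandermonde factors as $\det(z_i^{d_j}) = s_\lambda(z_1,\ldots,z_k)\prod_{i<j}(z_j-z_i)$ for a Schur polynomial $s_\lambda$ whose shape depends on the exponents, and $s_\lambda$ can vanish or be small even at widely separated points. One must therefore either select the $z_i$ adaptively to the $d_j$ within $E_y$, or replace the direct determinant estimate by a Tur\'an--Nazarov type inequality for polynomials with $k$ terms. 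The latter route directly yields $\sup_{\TT}|P| \leq (C/\mu)^{k-1}\,\sup_{E_y}|P|$, and combining with $\sup_{\TT}|P| \geq |a_k| = 1$ and $\sup_{E_y}|P| \leq y$ gives $\mu \leq C\, y^{1/(k-1)}$. The delicate technical point throughout is obtaining the exponent $k-1$ (rather than the naive $k(k-1)/2$ coming from the full product of pairwise differences); everything else is bookkeeping around the Vandermonde identity and the pigeonhole selection.
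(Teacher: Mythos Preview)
The paper does not give a proof of this statement; it is quoted from Lawton~\cite{L} as an external result and used as a black box throughout. There is therefore no proof in the present paper to compare your proposal against.

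On the proposal itself: you have correctly isolated the main difficulty. The Cramer/Vandermonde step gives the \emph{upper} bound $|\det(z_i^{d_j})| \le k(k-1)^{(k-1)/2}\max_i|P(z_i)|$, but the matching \emph{lower} bound in terms of the minimal angular separation $\delta$ genuinely fails for lacunary exponent sequences, precisely because the Schur-polynomial factor $s_\lambda(z_1,\dots,z_k)$ can vanish on well-separated configurations. Your first route (adaptive selection of the $z_i$ within $E_y$) is only named, not carried out, so as written the argument has a real gap there. Your second route---invoking a Tur\'an--Nazarov inequality for $k$-term exponential sums---does close the argument with the correct exponent $k-1$, but Nazarov's lemma (1993) postdates Lawton's theorem and is at least as hard to prove; appealing to it is essentially assuming a stronger form of the statement. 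Lawton's own argument in~\cite{L} is self-contained and proceeds by induction on $k$, splitting off one monomial and controlling the sublevel set of the $k$-term polynomial through that of a $(k{-}1)$-term one; neither of your two branches follows that path.
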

The strength of this result lies in the fact that the constant is absolute and depends on the number of nonzero coefficients of $P$ but it does not depend on $P$. 

Notice that we can always assume that the polynomials involved in multiple Mahler measure have at least two nonzero monomials, 
since $\log|ax^k|$ is a constant and can be easily extracted from the integral. It should be noted that the above theorem remains true for $k=1$ if $y$ is sufficiently small (i.e., $y<|a|$) and $C_1=0$.

It is not hard to prove a result where the constant depends on $P$. For example,
\begin{lem}[\cite{EW}, Lemma 3.8, pg. 58] Let $P(x_1,\dots,x_n) \in  \C[x_1,\dots,x_n]$. There there are constants $C_P,\delta_P$ that depend on $P$ such that 
\begin{equation}\label{multi} 
\mu_n(\{(z_1,\dots,z_n) \in \TT^n \, :\, |P(z_1,\dots,z_n)|<y\})\leq C_Py^{\delta_P},
\end{equation}
for small $y>0$.  
\end{lem}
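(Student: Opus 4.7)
\bigskip

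\noindent\textbf{Proof proposal.} The plan is to induct on the number of variables $n$, taking Theorem \ref{thm1L} as the engine in the base case and reducing the $n$-variable problem to the $(n-1)$-variable problem for the leading coefficient.

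For the base case $n=1$, if $P(x)=ax^d+\dots$ has $k\geq 2$ nonzero monomials, then applying Theorem \ref{thm1L} to the monic polynomial $P(x)/a$ yields
\[
\mu_1\bigl(\{|P|<y\}\bigr)=\mu_1\bigl(\{|P/a|<y/|a|\}\bigr)\leq C_k\,(y/|a|)^{1/(k-1)},
\]
so we may take $\delta_P=1/(k-1)$. If $P$ is a monomial, $|P|$ is constant on $\TT$ and the set is empty for small $y$.

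For the inductive step, expand
\[
P(x_1,\dots,x_n)=\sum_{j=0}^{d}Q_j(x_1,\dots,x_{n-1})\,x_n^{j},
\]
with $Q_d\not\equiv 0$. If $P$ does not depend on $x_n$, Fubini reduces the problem immediately to $n-1$ variables. If $Q_d$ happens to be a monomial in $x_1,\dots,x_{n-1}$, then $|Q_d|$ equals a positive constant $c_d$ on $\TT^{n-1}$, and applying the base case to the monic polynomial $P(z_1,\dots,z_{n-1},x_n)/Q_d(z_1,\dots,z_{n-1})$ fiberwise in $x_n$ (with at most $d+1$ nonzero coefficients) and integrating by Fubini yields $\mu_n(\{|P|<y\})\leq C_{d+1}(y/c_d)^{1/d}$.

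In the remaining case, $Q_d$ has at least two nonzero monomials, so the inductive hypothesis applies to it: $\mu_{n-1}(\{|Q_d|<\eta\})\leq C_{Q_d}\eta^{\delta_{Q_d}}$. Choose the threshold $\eta=y^{1/2}$ and split
\[
\TT^{n-1}=A_\eta\sqcup A_\eta^c,\qquad A_\eta=\{(z_1,\dots,z_{n-1})\in\TT^{n-1}:|Q_d|<\eta\}.
\]
On $A_\eta$, bound the fiber measure in $x_n$ trivially by $1$; this contributes at most $\mu_{n-1}(A_\eta)\leq C_{Q_d}y^{\delta_{Q_d}/2}$. On $A_\eta^c$, the polynomial $P/Q_d$ (in the variable $x_n$) is monic with at most $d+1$ nonzero monomials, so Theorem \ref{thm1L} gives
\[
\mu_1\bigl(\{x_n:|P(z_1,\dots,z_{n-1},x_n)|<y\}\bigr)\leq C_{d+1}(y/|Q_d|)^{1/d}\leq C_{d+1}\,y^{1/(2d)},
\]
and integrating over $A_\eta^c$ contributes at most $C_{d+1}\,y^{1/(2d)}$. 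Summing and taking $\delta_P=\min\{\delta_{Q_d}/2,\,1/(2d)\}$ yields the desired bound for small $y$.

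The main obstacle is precisely the degeneracy where the leading coefficient $Q_d$ can vanish on $\TT^{n-1}$; this is why naive slicing plus Fubini fails. The key device is the two-scale decomposition, which sacrifices a polynomial factor in $y$ to escape the neighborhood where $|Q_d|$ is small and thereby reduces to the absolute bound supplied by Lawton's Theorem \ref{thm1L}.
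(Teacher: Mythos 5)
The paper does not actually prove this lemma; it is quoted verbatim from Everest--Ward \cite{EW}, Lemma 3.8. Your induction on $n$, with the two-scale split according to whether the leading coefficient $Q_d$ is smaller or larger than $\eta=y^{1/2}$ and an application of Theorem \ref{thm1L} on the fibers where $|Q_d|\geq\eta$, is correct and is essentially the standard argument behind the cited result, so there is no genuine discrepancy to report. Two small bookkeeping points: the fiber polynomial $P(z_1,\dots,z_{n-1},\cdot)/Q_d(z_1,\dots,z_{n-1})$ has some number $k(z)\leq d+1$ of nonzero coefficients that may vary with $z$, so the constant should be $\max_{2\leq k\leq d+1}C_k$ and one should note that $(y/|Q_d|)^{1/(k(z)-1)}\leq (y/|Q_d|)^{1/d}$ only because $y/|Q_d|\leq 1$ on $A_\eta^c$ (the degenerate fibers with $k(z)=1$ give the empty set there, as you observe in the base case); and the trivial fiber bound on $A_\eta$ and the integration over $A_\eta^c$ should carry the factors $\mu_1(\TT)$ and $\mu_{n-1}(\TT^{n-1})$ rather than $1$, which only affects $C_P$ and not $\delta_P$.
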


In what follows, we will denote by
\[S_n(P,y)=\{(z_1,\dots,z_n) \in \TT^n \, :\, |P(z_1,\dots,z_n)|<y\},\]
where the $n$ depends on the number of variables involved. Thus $n \geq$ number of variables of $P$. We will write $S(P,y)$ for $S_1(P,y)$.

The following elementary lemma will be useful to bound integrals.

\begin{lem}\label{integral} Let $\ell$ be a  positive integer and $y,\delta>0$. Then
\begin{eqnarray*}
J_{\ell,\delta}(y)&:=& (-1)^\ell \int_0^y \log^\ell z d \left(z^\delta \right)\\
&=&y^\delta \left((-1)^\ell \log^\ell y +\frac{\ell}{\delta} (-1)^{\ell-1} \log^{\ell-1} y + \frac{\ell (\ell-1)}{\delta^2} (-1)^{\ell-2} \log^{\ell-2} y+\dots\right.\\
&&\left.+ 
\frac{ \ell (\ell-1)\cdots 2}{\delta^{\ell-1}} (-1)\log y + \frac{\ell!}{\delta^\ell}\right).
\end{eqnarray*}
\end{lem}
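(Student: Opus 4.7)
The plan is to prove this by integration by parts followed by a straightforward induction on $\ell$.

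First I would apply integration by parts directly to the definition, setting $u=\log^\ell z$ and $dv = d(z^\delta)$, so that $du = \ell \log^{\ell-1} z \, \frac{dz}{z}$ and $v = z^\delta$. Since $z^\delta \cdot \frac{dz}{z} = \frac{1}{\delta}\,d(z^\delta)$, this rewrites the remaining integral again in the form $\int_0^y \log^{\ell-1} z\, d(z^\delta)$. One needs to check that the boundary contribution at $z=0$ vanishes, which is where the hypothesis $\delta>0$ is used: $\lim_{z\to 0^+} z^\delta \log^\ell z = 0$. Folding in the sign $(-1)^\ell$, the calculation yields the recursion
\[
J_{\ell,\delta}(y) \;=\; (-1)^\ell y^\delta \log^\ell y \;+\; \frac{\ell}{\delta}\, J_{\ell-1,\delta}(y).
\]

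Next I would establish the base case $\ell=0$ directly:
\[
J_{0,\delta}(y) \;=\; \int_0^y d(z^\delta) \;=\; y^\delta,
\]
which matches the claimed formula (the only surviving term is $\ell!/\delta^\ell = 1$).

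Finally I would run induction on $\ell$. Assuming the formula for $J_{\ell-1,\delta}(y)$, the recursion above produces a leading term $(-1)^\ell y^\delta \log^\ell y$ and then, from $\frac{\ell}{\delta} J_{\ell-1,\delta}(y)$, a sequence of terms of the form
\[
\frac{\ell}{\delta}\cdot \frac{(\ell-1)(\ell-2)\cdots (\ell-k)}{\delta^{k}}(-1)^{\ell-1-k} \log^{\ell-1-k} y \cdot y^\delta
\;=\;\frac{\ell(\ell-1)\cdots(\ell-k)}{\delta^{k+1}}(-1)^{\ell-1-k} \log^{\ell-1-k} y \cdot y^\delta,
\]
which are exactly the remaining terms in the stated expansion after reindexing. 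Collecting them recovers the displayed formula.

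There is no real obstacle here; the only point that requires a moment of care is justifying the vanishing of the boundary term at $0$, which uses $\delta>0$ to dominate the logarithmic blow-up. The rest is bookkeeping of factorials and signs in the induction step.
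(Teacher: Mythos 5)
Your proof is correct and matches the paper's approach exactly: the paper simply states that the identity ``is easily obtained by repeated integration by parts,'' which is precisely the recursion $J_{\ell,\delta}(y)=(-1)^\ell y^\delta\log^\ell y+\frac{\ell}{\delta}J_{\ell-1,\delta}(y)$ plus induction that you carry out. Your explicit check that $\lim_{z\to 0^+}z^\delta\log^\ell z=0$ is the one detail the paper leaves implicit, and it is handled correctly.
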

\begin{proof}
The proof is easily obtained by repeated integration by parts. 
\end{proof}
\begin{cor}\label{limI} For $0<y\leq 1$ we have
\[0\leq J_{\ell, \delta}(y) \leq y^\delta(\ell+1)! \max \left \{\frac{1}{\delta},(-\log y)\right\}^\ell.\]
In other words, 
\[ \lim_{y \rightarrow 0} J_{\ell, \delta}(y)=0.\]
\end{cor}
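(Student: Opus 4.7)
The plan is to read off both claims directly from the closed form for $J_{\ell,\delta}(y)$ provided by Lemma \ref{integral}. Since $0 < y \leq 1$ gives $-\log y \geq 0$, every term in the bracketed expression
\[ (-\log y)^\ell + \frac{\ell}{\delta}(-\log y)^{\ell-1} + \frac{\ell(\ell-1)}{\delta^2}(-\log y)^{\ell-2} + \dots + \frac{\ell!}{\delta^{\ell-1}}(-\log y) + \frac{\ell!}{\delta^\ell} \]
is non-negative (note $(-1)^{\ell-k}\log^{\ell-k} y = (-\log y)^{\ell-k}$), and $y^\delta > 0$; this yields $J_{\ell,\delta}(y) \geq 0$ immediately.

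For the upper bound, I would bound the generic term $\frac{\ell!}{(\ell-k)!\,\delta^k}(-\log y)^{\ell-k}$ for $k=0,1,\dots,\ell$ in two steps. First, each binomial-type coefficient $\ell!/(\ell-k)!$ is at most $\ell!$. Second, the product $\delta^{-k}(-\log y)^{\ell-k}$ is a product of $\ell$ factors, each of which is either $1/\delta$ or $-\log y$, so it is at most $\max\{1/\delta,\,-\log y\}^\ell$. Summing the $\ell+1$ resulting terms gives
\[ J_{\ell,\delta}(y) \leq y^\delta \cdot (\ell+1)\cdot \ell! \cdot \max\left\{\frac{1}{\delta},\,-\log y\right\}^\ell = y^\delta (\ell+1)!\, \max\left\{\frac{1}{\delta},\,-\log y\right\}^\ell, \]
as required.

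Finally, for $\lim_{y\to 0^+} J_{\ell,\delta}(y) = 0$, I would split into cases according to the value of the max. If $-\log y \leq 1/\delta$ then the bound is $y^\delta (\ell+1)!/\delta^\ell$, which tends to $0$ since $y^\delta \to 0$. If $-\log y > 1/\delta$ (which happens for small $y$), the bound is $y^\delta (\ell+1)!(-\log y)^\ell$, and this also tends to $0$ because any positive power of $y$ dominates any power of $-\log y$ as $y\to 0^+$ (a standard fact, provable via $u = -\log y$ so $y^\delta(-\log y)^\ell = u^\ell e^{-\delta u} \to 0$). There is no real obstacle here: the whole corollary is a straightforward bookkeeping exercise on the explicit formula of Lemma \ref{integral}, and the only mildly delicate point is choosing the unified bound $\max\{1/\delta,-\log y\}^\ell$ so that the estimate is valid uniformly on $(0,1]$ without splitting into cases inside the statement.
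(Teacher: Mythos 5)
Your proposal is correct and matches the paper's (implicit) argument: the paper states Corollary \ref{limI} as an immediate consequence of the explicit formula in Lemma \ref{integral}, and your term-by-term estimates — nonnegativity from $-\log y\geq 0$, the bound $\frac{\ell!}{(\ell-k)!\,\delta^k}(-\log y)^{\ell-k}\leq \ell!\max\{1/\delta,-\log y\}^\ell$ summed over $\ell+1$ terms, and $y^\delta(-\log y)^\ell\to 0$ — are exactly the bookkeeping the authors leave to the reader. Nothing is missing.
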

For the remainder of this work, we will denote by
\begin{equation}\label{I}
I_{\ell,k}(y):= J_{\ell, \frac{1}{k-1}}(y)= (-1)^\ell \int_0^y \log^\ell z d \left(z^\frac{1}{k-1} \right).
\end{equation}
We finish this section by recalling the statement of the following extension of H\"older's inequality:
\begin{lem}\label{superHolder}
Let $S$ a measurable set of $\R^n$ or $\C^n$ and $f_1,\dots,f_s$ measurable complex or real valued functions. Then 
\[\int_S |f_1\cdots f_s| dx \leq \left(\int_S |f_1|^s dx\right)^\frac{1}{s} \cdots \left(\int_S |f_s|^s dx\right)^\frac{1}{s}.\]
\end{lem}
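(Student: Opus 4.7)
The plan is to prove this by induction on $s$, which is the standard derivation of the generalized H\"older inequality from the classical two--function version. For $s=1$ the statement is trivial, and for $s=2$ the claim is just the classical H\"older inequality with conjugate exponents $p=q=2$ (i.e.\ the Cauchy--Schwarz inequality), which I will take as known from standard measure theory.

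For the inductive step, assume the inequality for $s-1$ functions (with exponent $s-1$ on the right). I would split the product as $|f_1|\cdot |f_2\cdots f_s|$ and apply the classical H\"older inequality in the two-function form with the conjugate pair $p=s$ and $q=s/(s-1)$, which satisfies $\frac{1}{p}+\frac{1}{q}=1$. This gives
\[
\int_S |f_1\cdots f_s|\,dx \le \left(\int_S |f_1|^s\,dx\right)^{1/s}\left(\int_S |f_2\cdots f_s|^{s/(s-1)}\,dx\right)^{(s-1)/s}.
\]
To handle the second factor, I would apply the inductive hypothesis to the $s-1$ functions $g_i:=|f_i|^{s/(s-1)}$ for $i=2,\dots,s$, using exponent $s-1$ on the right:
\[
\int_S |g_2\cdots g_s|\,dx \le \prod_{i=2}^{s}\left(\int_S |g_i|^{s-1}\,dx\right)^{1/(s-1)} = \prod_{i=2}^{s}\left(\int_S |f_i|^{s}\,dx\right)^{1/(s-1)},
\]
where the key arithmetic check is that $\frac{s}{s-1}\cdot (s-1)=s$, so the exponent on $|f_i|$ lands exactly at $s$. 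Raising this bound to the power $(s-1)/s$ and combining with the first display yields
\[
\int_S |f_1\cdots f_s|\,dx \le \prod_{i=1}^{s}\left(\int_S |f_i|^{s}\,dx\right)^{1/s},
\]
completing the induction.

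There is no real obstacle here since this is a standard textbook lemma; the only thing to be careful about is the bookkeeping of the exponents in the inductive step, namely that the pair $(s, s/(s-1))$ of H\"older conjugates interacts correctly with the exponent $s-1$ from the inductive hypothesis so that all the $|f_i|$ end up raised to the same power $s$. A shorter alternative, if one prefers, is to invoke the general form of H\"older's inequality with conjugate exponents $p_1=\cdots=p_s=s$ (so that $\sum 1/p_i =1$) directly from a reference; but presenting the induction keeps the note self-contained.
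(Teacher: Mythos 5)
Your induction is correct, and the exponent bookkeeping (pairing the conjugate exponents $s$ and $s/(s-1)$ with the inductive hypothesis at exponent $s-1$ so that each $|f_i|$ ends up raised to the power $s$) checks out. There is nothing in the paper to compare against: the authors explicitly only \emph{recall} this extension of H\"older's inequality as a known fact and give no proof, so your argument simply supplies the standard textbook derivation that the paper omits. The only (minor) point you leave implicit is the degenerate case where some $\int_S |f_i|^s\,dx$ is $0$ or $+\infty$, which is handled by the usual conventions and does not affect the validity of the lemma as used in the paper.
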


\section{Integration over combinations of $S(P,y)$}

In this section, we consider the integration over sets resulting from combining the different $S(P,y)$'s.

\begin{lem} \label{Lemma}
 Let $P(x) \in \C[x]$ a polynomial having $k\geq 2$ non-zero complex coefficients each having modulus $\geq 1$. Let $0<y\leq1$.
Then \[0 \leq  (-1)^\ell \int_{S(P,y)} \log^\ell|P(x)| \frac{dx }{x} \leq  C_k I_{\ell,k}(y).\]
Analogously, if  $P(x_1,\dots,x_n) \in  \C[x_1,\dots,x_n]$ and $0<y$ small enough to satisfy equation (\ref{multi}),
\[0 \leq  (-1)^\ell \int_{S_n(P,y)} \log^\ell|P(x_1,\dots,x_n)| \frac{dx_1 }{x_1}\cdots \frac{dx_n }{x_n} \leq  C_P J_{\ell,\delta_P}(y).\]
\end{lem}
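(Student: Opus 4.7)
The strategy is to express the integral over $S(P,y) \subseteq \TT$ as a one-dimensional Stieltjes integral against the distribution function of $|P|$, and then dominate it term-by-term using Theorem \ref{thm1L}.

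Set $F(z) := \mu_1(S(P,z))$ for $z > 0$, a non-decreasing function satisfying $F(z) \leq C_k z^{1/(k-1)}$ by Theorem \ref{thm1L} (after scaling $P$ by its leading coefficient, which has modulus at least $1$ by assumption). Pushing forward the measure on $\TT$ under $x \mapsto |P(x)|$ gives
\[
\int_{S(P,y)} \log^\ell|P(x)| \frac{dx}{x} = \int_0^y (\log z)^\ell \, dF(z).
\]
Since $(-1)^\ell (\log z)^\ell = |\log z|^\ell \geq 0$ on $(0,y] \subseteq (0,1]$ and $F$ is non-decreasing, the lower bound $0 \leq (-1)^\ell \int_0^y (\log z)^\ell dF(z)$ is immediate.

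For the upper bound, integrate by parts. Lawton's polynomial decay estimate forces $F(z)|\log z|^\ell \to 0$ as $z \downarrow 0$, so the boundary term at the lower endpoint vanishes and one obtains
\[
(-1)^\ell \int_0^y (\log z)^\ell dF(z) = |\log y|^\ell F(y) + \ell \int_0^y \frac{|\log z|^{\ell-1}}{z} F(z) \, dz.
\]
The identical computation applied to $C_k I_{\ell,k}(y) = (-1)^\ell \int_0^y (\log z)^\ell d(C_k z^{1/(k-1)})$ yields the same expression with $F(z)$ replaced by $C_k z^{1/(k-1)}$. Both terms on the right are non-negative and $F(z) \leq C_k z^{1/(k-1)}$ pointwise, so term-by-term comparison gives $(-1)^\ell \int_0^y (\log z)^\ell dF(z) \leq C_k I_{\ell,k}(y)$. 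The multivariate statement is proved in exactly the same way, with $F$ replaced by $z \mapsto \mu_n(S_n(P,z))$ and Lawton's bound replaced by \eqref{multi}, yielding $C_P J_{\ell,\delta_P}(y)$.

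The main technical point to be careful about is the Stieltjes integration by parts, and in particular the vanishing of the lower boundary term. This is precisely what the polynomial decay $F(z) = O(z^{1/(k-1)})$ from Lawton's estimate delivers; without this decay rate the logarithmic singularity of $(\log z)^\ell$ at zero could produce a nonzero, or even infinite, contribution. Beyond this, the proof is a routine comparison of non-negative quantities.
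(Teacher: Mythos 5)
Your proof is correct and follows essentially the same route as the paper's: both introduce the distribution function $\mu_1(S(P,z))$, bound it by $C_k z^{1/(k-1)}$ via Theorem \ref{thm1L} after normalizing the leading coefficient, convert the torus integral to an integral in $z$ against that distribution function, integrate by parts, and compare with the corresponding expression for $C_k z^{1/(k-1)}$ to land on $C_k I_{\ell,k}(y)$. Your Stieltjes formulation and explicit treatment of the vanishing boundary term at $z=0$ are, if anything, slightly more careful than the paper's version, which writes $h'(z)$ and performs the comparison by a second integration by parts.
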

\begin{proof}
The case $\ell=1$ is Lemma 4 in \cite{L}. The general proof starts in the same way. 
Define for $0<z\leq 1$
\[h(z):=\mu_1(S(P,z)),\]
where we recall that $\mu_1$ stands for the Lebesgue measure of the set.  
Let the leading coefficient of $P(x)$ be $a$ with $|a|\geq 1$. Then $a^{-1}P$ is monic and so Theorem \ref{thm1L} implies that
\[h(z)\leq C_k\left(\frac{z}{|a|}\right)^\frac{1}{k-1} \leq C_k z^\frac{1}{k-1}.\]
Now we compute the desired integral.
\begin{eqnarray*}(-1)^\ell \int_{S(P,y)} \log^\ell|P(x)| \frac{dx}{x} &=&(-1)^\ell\int_{z=0}^{z=y} \int_{{|x|=1}\atop{|P(x)|=z}} \log^\ell z \frac{dx}{x} dz\\ &=&(-1)^\ell\int_{0}^{y} \log^\ell z h'(z) dz\\
&=& (-1)^\ell \log^\ell y h(y)  - \int_{0}^{y} \frac{d}{dz}\left[(-\log z)^\ell\right] h(z) dz\\
& \leq& (-1)^\ell \log^\ell y C_k y^\frac{1}{k-1}  - \int_{0}^{y} \frac{d}{dz}\left[(-\log z)^\ell\right] C_k z^\frac{1}{k-1} dz
\end{eqnarray*}
where the last inequality is consequence of the fact that $(-\log z)^\ell$ is a positive decreasing function and its derivative is negative. By applying integration by parts again we obtain
\[\leq (-1)^\ell C_k \int_0^y \log^\ell z d \left(z^\frac{1}{k-1} \right),\]
which finishes the proof of the first statement by Lemma \ref{integral} and equation (\ref{I})

The proof of the second statement follows along the same lines.

\end{proof}

\begin{lem}\label{intersection}
 Let $P_1(x),\dots, P_s(x) \in \C[x]$  be polynomials having $k_1,\dots,k_s$ nonzero complex coefficients with absolute value greater than
 1 and $0< y_1,\dots, y_s\leq1$. Let $1\leq n\leq s$. Then  
\[0\leq (-1)^{s} \int_{\bigcap_{i=1}^n S(P_i,y_i) \setminus \bigcup_{i=n+1}^s  S(P_i,y_i)} \log|P_1(x)| \cdots \log|P_s(x)| \frac{dx}{x}\]
\[\leq \left( C_{k_1} I_{n, k_1}(y_1)\cdots  C_{k_n} I_{n, k_n}(y_n) \right)^\frac{1}{n} (-1)^{s-n} \log y_{n+1}\cdots \log y_s.\]
\end{lem}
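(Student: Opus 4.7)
The plan is to reduce the integral over the set $A := \bigcap_{i=1}^n S(P_i, y_i) \setminus \bigcup_{i=n+1}^s S(P_i, y_i)$ to an $n$-fold product of quantities already controlled by Lemma \ref{Lemma}. The two main tools will be the generalized H\"older inequality (Lemma \ref{superHolder}) applied with exponent $n$ to the $n$ ``small'' factors (those indexed by $i \leq n$), together with a pointwise bound on the $s-n$ ``large'' factors (those indexed by $j \geq n+1$) coming directly from the defining inequalities of $A$.

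First I would observe that on $A$ the factors $\log|P_i(x)|$ for $i \leq n$ are strictly negative since $|P_i(x)| < y_i \leq 1$, while for $j \geq n+1$ the inequality $|P_j(x)| \geq y_j$ gives $-\log|P_j(x)| \leq -\log y_j = |\log y_j|$. Combining these, $(-1)^s \log|P_1|\cdots\log|P_s|$ is non-negative on $A$ and dominated pointwise by $\prod_{i=1}^n (-\log|P_i|)\cdot\prod_{j=n+1}^s (-\log y_j)$; this at once gives the lower bound in the statement and the reduction
\[
(-1)^s \int_A \log|P_1|\cdots\log|P_s| \frac{dx}{x} \leq \prod_{j=n+1}^s (-\log y_j)\, \int_A \prod_{i=1}^n (-\log|P_i(x)|) \frac{dx}{x}.
\]
I would then apply Lemma \ref{superHolder} to the right-hand integral with $n$ functions each raised to the $n$th power, and enlarge each individual domain from $A$ to $S(P_i, y_i)$, getting
\[
\int_A \prod_{i=1}^n (-\log|P_i|) \frac{dx}{x} \leq \prod_{i=1}^n \left( \int_{S(P_i, y_i)} (-\log|P_i(x)|)^n \frac{dx}{x} \right)^{1/n}.
\]
Each inner integral equals $(-1)^n \int_{S(P_i, y_i)} \log^n|P_i| \frac{dx}{x}$, which Lemma \ref{Lemma} bounds by $C_{k_i} I_{n, k_i}(y_i)$. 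Multiplying these estimates together and rewriting $\prod_{j=n+1}^s (-\log y_j) = (-1)^{s-n} \log y_{n+1}\cdots \log y_s$ produces the claimed upper bound.

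The main obstacle will be the pointwise control on the $j \geq n+1$ factors: the inequality $|P_j(x)| \geq y_j$ bounds $-\log|P_j(x)|$ from above but does not in itself bound $\log|P_j(x)|$ from above, since $|P_j|$ could be significantly larger than $1$ on the torus. Making the domination step rigorous will require either invoking the coefficient hypothesis (so that on the relevant part of $A$ the factor $-\log|P_j|$ is the correct quantity to keep) or further splitting $A$ into subregions on which $|P_j(x)| \lessgtr 1$ and checking that the sign $(-1)^s$ continues to align with the product. Once this pointwise domination is secured, the remainder of the argument is a routine combination of H\"older and Lemma \ref{Lemma}.
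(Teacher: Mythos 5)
Your argument is exactly the paper's: bound the factors with index $j\ge n+1$ pointwise by $-\log y_j$, enlarge the domain by dropping the set difference, apply Lemma \ref{superHolder} with exponent $n$ to the remaining $n$ factors, enlarge each resulting domain to $S(P_i,y_i)$, and finish with Lemma \ref{Lemma}. The ``main obstacle'' you flag in your last paragraph is not resolved in the paper either: the published proof simply asserts that $0\le -\log|P(x)|\le -\log y$ for $x\notin S(P,y)$, and the left-hand inequality --- which is what legitimizes multiplying the bounds termwise and what yields the stated lower bound $0\le\cdots$ --- fails wherever $|P(x)|>1$. So you have correctly identified the one soft spot in this lemma; the paper glosses over it rather than fixing it (in the eventual application the lengths of the polynomials $P_{i,\mathbf{r}}$ are bounded independently of $\mathbf{r}$, which is how one would repair the estimate).
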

\begin{proof}
Notice that  $0\leq -\log |P(x)|\leq -\log y$ for $x\not \in S(P, y)$ for $0<y\leq 1$. Therefore,
\begin{eqnarray*}
&&(-1)^{s} \int_{\bigcap_{i=1}^n S(P_i,y_i) \setminus \bigcup_{i=n+1}^s  S(P_i,y_i)} \log |P_1(x)| \cdots \log |P_s(x)| \frac{dx}{x}\\
&&\leq (-1)^{s} \log y_{n+1}\cdots \log y_s\int_{\bigcap_{i=1}^n S(P_i,y_i) \setminus \bigcup_{i=n+1}^s  S(P_i,y_i)} \log |P_1(x)| \cdots \log |P_n(x)| \frac{dx}{x} \\
&&\leq (-1)^{s} \log y_{n+1}\cdots \log y_s\int_{\bigcap_{i=1}^n S(P_i,y_i)} \log |P_1(x)| \cdots \log |P_n(x)| \frac{dx}{x} \\
&&\leq (-1)^{s-n} \log y_{n+1}\cdots \log y_s \left( C_{k_1} I_{n,k_1}(y_1)\cdots  C_{k_n} I_{n,k_n}(y_n) \right)^\frac{1}{n}\\
\end{eqnarray*}
by Lemma \ref{superHolder} and Lemma \ref{Lemma}.
\end{proof}
\begin{lem}\label{cup}
Let $P_1(x),\dots, P_s(x) \in \C[x]$ be polynomials having $k_1,\dots,k_s$ nonzero complex coefficients with absolute value greater than
1 and $0< y_1,\dots, y_s\leq 0$.  
Then
\[0\leq (-1)^{s} \int_{S(P_1,y_1)\cup \dots \cup S(P_s,y_s)} \log |P_1(x)| \cdots \log |P_s(x)| \frac{dx}{x}\]
\[\leq \sum_{A\subset\{1,\dots,s\}} \prod_{i \in A} \left(C_{k_i} I_{|A|,k_i}(y_i)\right)^\frac{1}{|A|} \prod_{i \in \{1,\dots,s\} \setminus A} (-\log y_{i}).\]
\end{lem}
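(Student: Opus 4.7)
The plan is to partition the integration domain into disjoint pieces indexed by the nonempty subsets of $\{1,\dots,s\}$ and apply Lemma~\ref{intersection} to each piece. For every nonempty $A\subseteq\{1,\dots,s\}$ set
$$T_A:=\bigcap_{i\in A}S(P_i,y_i)\setminus\bigcup_{i\notin A}S(P_i,y_i).$$
A point $x\in\bigcup_i S(P_i,y_i)$ belongs to exactly one $T_A$, namely $A=\{i:x\in S(P_i,y_i)\}$, so $\bigcup_{i=1}^s S(P_i,y_i)=\bigsqcup_{A\neq\emptyset}T_A$ and the integral on the left-hand side of the asserted inequality splits as a sum of integrals over the $T_A$'s.

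The contribution of each $T_A$ is controlled directly by Lemma~\ref{intersection}: after relabelling indices so that the elements of $A$ play the role of $\{1,\dots,|A|\}$ and the elements of its complement the role of $\{|A|+1,\dots,s\}$, we obtain
$$0\leq(-1)^s\int_{T_A}\log|P_1(x)|\cdots\log|P_s(x)|\frac{dx}{x}\leq\prod_{i\in A}\bigl(C_{k_i}\,I_{|A|,k_i}(y_i)\bigr)^{1/|A|}\prod_{i\notin A}(-\log y_i).$$
Each such bound is nonnegative, since $0<y_i\leq 1$ forces $-\log y_i\geq 0$, and $I_{|A|,k_i}(y_i)\geq 0$ by Corollary~\ref{limI}.

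Summing these nonnegative bounds over all nonempty $A\subseteq\{1,\dots,s\}$ already gives the desired estimate. Adjoining the $A=\emptyset$ term, which equals $\prod_{i=1}^s(-\log y_i)\geq 0$, only loosens the inequality and produces the symmetric form in the statement. There is essentially no obstacle: the only point to verify is that Lemma~\ref{intersection} is insensitive to a permutation of the indices, which is immediate from the symmetry of its proof, and that all summands have a common sign, which is guaranteed by the hypothesis $0<y_i\leq 1$. The argument is pure bookkeeping on top of Lemma~\ref{intersection}.
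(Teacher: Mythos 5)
Your proof is correct and follows essentially the same route as the paper: decompose the union into the disjoint pieces $\bigcap_{i\in A}S(P_i,y_i)\setminus\bigcup_{i\notin A}S(P_i,y_i)$ and apply Lemma~\ref{intersection} to each, summing the resulting nonnegative bounds. Your explicit restriction to nonempty $A$ in the partition (with the empty-set term only adjoined afterwards to match the stated sum) is in fact slightly more careful than the paper's displayed set identity.
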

\begin{proof}
We start with the observation that
\[\bigcup_{i=1}^s S(P_i,y_i)=\bigcup_{A\subset\{1,\dots,s\}} \left(\bigcap_{i\in A} S(P_i,y_i) \setminus \bigcup_{i \in \{1,\dots, s\}\setminus A}   S(P_i,y_i) \right).\]
By applying Lemma \ref{intersection}, we get 
\begin{eqnarray*}
&&(-1)^{s} \int_{S(P_1,y_1)\cup \dots \cup S(P_s,y_s)} \log |P_1(x)| \cdots \log |P_s(x)| \frac{dx}{x}\\
&&\leq \sum_{A\subset\{1,\dots,s\}}(-1)^{s} \int_{\bigcap_{i\in A} S(P_i,y_i) \setminus \bigcup_{i \in \{1,\dots, s\}\setminus A}   S(P_i,y_i) } \log |P_1(x)| \cdots \log |P_s(x)| \frac{dx}{x}\\
&&\leq \sum_{A\subset\{1,\dots,s\}} \prod_{i \in A} \left(C_{k_i} I_{|A|,k_i}(y_i)\right)^\frac{1}{|A|} \prod_{i \in \{1,\dots,s\} \setminus A} (-\log y_{i}).
 \end{eqnarray*}

\end{proof}

Setting $y_1=\dots=y_s=y$ and letting $y \rightarrow 0$, we get the following result by Corollary \ref{limI}.

\begin{cor}\label{cuplimit}Let $P_1(x),\dots, P_s(x) \in \C[x]$ be polynomials having $k_1,\dots,k_s$ nonzero complex coefficients.
Let $0<y<1$. As $y$ approaches 0, we obtain
\[\lim_{y \rightarrow 0} \int_{S(P_1,y)\cup \dots \cup S(P_s,y)} \log |P_1(x)| \cdots \log |P_s(x)| \frac{dx}{x}=0,\]
where the speed of convergence is independent of the polynomials $P_1(x),\dots,P_s(x)$. 
\end{cor}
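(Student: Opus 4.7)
The plan is to deduce the corollary directly from Lemma \ref{cup} by setting $y_1=\cdots=y_s=y$ and letting $y\to 0$. Lemma \ref{cup} yields the upper bound
\[\sum_{\emptyset\neq A\subset\{1,\dots,s\}}\prod_{i\in A}\bigl(C_{k_i}I_{|A|,k_i}(y)\bigr)^{1/|A|}(-\log y)^{s-|A|},\]
and since this is a finite sum indexed by subsets of $\{1,\dots,s\}$, it suffices to show each summand tends to zero at a rate that depends only on the integers $k_1,\dots,k_s$.

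For a fixed non-empty $A$ with $|A|=m$, I would invoke Corollary \ref{limI} in the form
\[I_{m,k_i}(y)\leq (m+1)!\,y^{1/(k_i-1)}(-\log y)^m,\]
valid once $y$ is small enough that $-\log y\geq k_i-1$ for every $i$. Substituting into the summand, the $1/m$-th root contributes one factor of $(-\log y)$ for each of the $m$ indices in $A$, and these combine with the external $(-\log y)^{s-m}$ to give an overall $(-\log y)^s$. Simultaneously, the $y$-exponents assemble into $\varepsilon_A:=\frac{1}{m}\sum_{i\in A}\frac{1}{k_i-1}$, which is strictly positive. Since $y^{\varepsilon}(-\log y)^s\to 0$ as $y\to 0^+$ for every $\varepsilon>0$, each summand vanishes in the limit. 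The leading constant involves only $C_{k_i}$ and factorials of the $k_i$, so the rate of decay is insensitive to the actual coefficients of the $P_i$, which gives the uniformity claim.

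The main obstacle is really just bookkeeping: one must be careful that the decomposition $\bigcup_i S(P_i,y)=\bigsqcup_{A\neq\emptyset}\bigl(\bigcap_{i\in A}S(P_i,y)\setminus\bigcup_{i\notin A}S(P_i,y)\bigr)$ used implicitly in Lemma \ref{cup} rules out the empty-$A$ term, which would otherwise contribute $(-\log y)^s$ with no compensating power of $y$ and ruin the limit. Once that is in hand and the exponents are tracked as above, the convergence and its uniformity in the polynomials follow directly.
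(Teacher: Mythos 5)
Your proposal is correct and matches the paper's argument, which is simply the sentence preceding the corollary: set $y_1=\dots=y_s=y$ in Lemma \ref{cup} and apply Corollary \ref{limI} to see that each summand decays like $y^{\varepsilon_A}(-\log y)^s$ with constants depending only on the $k_i$. Your remark about excluding the empty-$A$ term is a worthwhile clarification of a point the paper glosses over (Lemma \ref{intersection} only covers $1\leq n\leq s$, and the $A=\emptyset$ piece of the decomposition lies outside the union anyway), but it does not change the route.
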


\begin{lem}\label{partintersection}
 Let $P_1(x),\dots, P_s(x) \in \C[x]$ be polynomials having $k_1,\dots,k_s$ nonzero complex coefficients with absolute value greater than
 1 and $0< y_1,\dots, y_s\leq 1$.  Then  
\begin{eqnarray*}
0&\leq& (-1)^{s} \int_{S(P_1,y_1)\cap \dots \cap S(P_s,y_s)} \log|P_1(x)| \cdots \log|P_s(x)| \frac{dx}{x}\\
&\leq& \left( C_{k_1} I_{s,k_1}(y_1)\cdots  C_{k_s} I_{s,k_s}(y_s) \right)^\frac{1}{s}.
\end{eqnarray*} 
\end{lem}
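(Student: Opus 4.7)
My plan is to view this as a direct application of Lemma \ref{superHolder} (the $s$-fold H\"older inequality) applied on the intersection, followed by the bound from Lemma \ref{Lemma} on each single-polynomial factor. The key observation is that on $S(P_j,y_j)$ we have $|P_j(x)|<y_j\leq 1$, so $\log|P_j(x)|<0$; consequently, on $S:=\bigcap_{i=1}^s S(P_i,y_i)$ each factor $\log|P_i(x)|$ is negative and $(-1)^s\log|P_1(x)|\cdots\log|P_s(x)|\geq 0$, which explains and establishes the lower bound and lets us identify the integrand with its absolute value (up to the sign $(-1)^s$).

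For the upper bound, I would apply Lemma \ref{superHolder} to the functions $f_i=\log|P_i|$ on the measurable set $S$, using the weight $dx/x$ (which is just a bounded multiple of Lebesgue measure on $\TT$, so the inequality still applies). This gives
\[
\int_{S}\bigl|\log|P_1(x)|\cdots\log|P_s(x)|\bigr|\frac{dx}{x}\leq\prod_{i=1}^s\left(\int_{S}\bigl|\log|P_i(x)|\bigr|^s\frac{dx}{x}\right)^{1/s}.
\]
Since $S\subseteq S(P_i,y_i)$ for each $i$, each factor is bounded above by the same integral over the larger set $S(P_i,y_i)$. On $S(P_i,y_i)$ one has $|\log|P_i||^s=(-1)^s\log^s|P_i|$, so Lemma \ref{Lemma} (with $\ell=s$, $k=k_i$) yields
\[
\int_{S(P_i,y_i)}\bigl|\log|P_i(x)|\bigr|^s\frac{dx}{x}=(-1)^s\int_{S(P_i,y_i)}\log^s|P_i(x)|\frac{dx}{x}\leq C_{k_i}I_{s,k_i}(y_i).
\]
Substituting into the H\"older bound and using that $(-1)^s$ times the original integrand equals its absolute value on $S$ completes the proof.

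The argument is essentially mechanical: there is no real obstacle beyond keeping track of signs and verifying that the set $S$ lies inside each individual $S(P_i,y_i)$ (so that the bound of Lemma \ref{Lemma} is available for each factor after enlarging the domain). The only subtlety worth flagging in the written proof is the sign bookkeeping that turns the product $\log|P_1|\cdots\log|P_s|$, which is signed, into the nonnegative quantity $|\log|P_1|\cdots\log|P_s||$ to which H\"older applies directly.
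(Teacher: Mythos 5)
Your proposal is correct and is essentially the paper's argument: the paper proves this lemma by citing Lemma \ref{intersection} with $n=s$, and the proof of that lemma in the case $n=s$ is precisely your combination of the generalized H\"older inequality (Lemma \ref{superHolder}) on $\bigcap_{i=1}^s S(P_i,y_i)$ with the single-polynomial bound of Lemma \ref{Lemma} (with $\ell=s$) after enlarging each domain to $S(P_i,y_i)$. You have merely inlined the intermediate lemma, and your sign bookkeeping is the same as the paper's.
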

\begin{proof}
 This is a consequence of Lemma \ref{intersection} with $n=s$. 
\end{proof}

\begin{lem}
 Let $P_1(x),\dots, P_s(x) \in \C[x]$ be polynomials having $k_1,\dots,k_s$ nonzero complex coefficients with absolute value greater than
 1 and $0< y_1,\dots, y_s\leq 1$.  Then  
\begin{eqnarray*}
0&\leq& \int_{S(P_1,y_1)\cap \dots \cap S(P_s,y_s)} \max_{1\leq i \leq s}\{\log |P_i(x)|\} \frac{dx}{x}\\
&\leq& (2\pi)^{1-\frac{1}{s}}\left( C_{k_1} I_{s,k_1}(y_1)\cdots  C_{k_s} I_{s,k_s}(y_s) \right)^\frac{1}{s}.
\end{eqnarray*} 
\end{lem}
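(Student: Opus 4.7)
The plan is to reduce the estimate to Lemma \ref{partintersection} via a pointwise geometric-mean bound followed by H\"older's inequality. On the intersection $\bigcap_{i=1}^{s} S(P_i, y_i)$ each factor $|P_i(x)|$ is strictly less than $y_i\le 1$, hence $\log|P_i(x)|\le 0$ and therefore $\max_i\log|P_i(x)|\le 0$; the integral is non-positive, so the content of the statement is really a bound on its absolute value, which equals $\int_{\bigcap_i S(P_i,y_i)}\min_i(-\log|P_i(x)|)\,\frac{dx}{x}$.

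The key pointwise inequality is that $\min_i a_i\le\left(\prod_i a_i\right)^{1/s}$ for non-negative $a_1,\dots,a_s$, which is immediate from $(\min_i a_i)^s\le\prod_i a_i$. Taking $a_i=-\log|P_i(x)|$ gives
\[
\min_{i}(-\log|P_i(x)|)\le\left(\prod_{i=1}^{s}(-\log|P_i(x)|)\right)^{1/s}.
\]

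I would then integrate this pointwise inequality and apply H\"older's inequality in the form $\int_S f\,d\mu\le\left(\int_S f^{s}\,d\mu\right)^{1/s}\mu(S)^{(s-1)/s}$ with $f=\left(\prod_i(-\log|P_i|)\right)^{1/s}$ and $S=\bigcap_i S(P_i,y_i)\subseteq\TT$. Because $\mu_1(\TT)=2\pi$, this step produces the factor $(2\pi)^{1-1/s}$ and reduces the task to estimating $\int_{S}\prod_{i=1}^{s}(-\log|P_i(x)|)\frac{dx}{x}$. This last integral is exactly $(-1)^{s}$ times the integral controlled by Lemma \ref{partintersection}, which supplies a bound in terms of the product of the $C_{k_i}I_{s,k_i}(y_i)$ factors.

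Assembling the three estimates yields the desired inequality. There is no substantive obstacle in this argument: all the analytic content has already been packaged into Lemmas \ref{Lemma}, \ref{intersection} and \ref{partintersection}, and the present lemma just combines them with the elementary min/geometric-mean bound. The only care required is tracking the sign of the integrand and matching the exponents of the geometric-mean step with those of the H\"older step so that the power of $\mu(S)$ comes out to $1-1/s$.
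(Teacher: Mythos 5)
Your proposal is correct and follows essentially the same route as the paper: the pointwise bound $\min_i(-\log|P_i|)\le\left(\prod_i(-\log|P_i|)\right)^{1/s}$ (equivalently $(\min_i(-\log|P_i|))^s\le\prod_i(-\log|P_i|)$), H\"older's inequality contributing the factor $\mu_1(S)^{1-1/s}\le(2\pi)^{1-1/s}$, and Lemma \ref{partintersection} to control the resulting product integral. Note that this chain of estimates (in your version and in the paper's) actually yields the exponent $\frac{1}{s^2}$ on the product $C_{k_1}I_{s,k_1}(y_1)\cdots C_{k_s}I_{s,k_s}(y_s)$ rather than the $\frac{1}{s}$ displayed in the statement, a harmless discrepancy already present in the paper and irrelevant for the limiting Corollary \ref{caplimit}.
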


\begin{proof}
Notice that $\max_{1\leq i \leq s}\{\log |P_i(x)|\}= -\min_{1\leq i \leq s}\{-\log |P_i(x)|\}$. In $S(P_1,y_1)\cap \dots \cap S(P_s,y_s)$, we have
$0\leq \min_{1\leq i \leq s}\{-\log |P_i(x)|\}\leq -\log|P_i(x)|$ for any $i=1, \dots, s$. Thus,
\[
 \left(-\max_{1\leq i \leq s}\{\log |P_i(x)|\}\right)^s=\left(\min_{1\leq i \leq s}\{-\log |P_i(x)|\}\right)^s \leq (-1)^s \log|P_1(x)| \cdots \log|P_s(x)|.
\]
 By applying H\"older's inequality, and taking into account that the measure of $S(P_1,y_1)\cap \dots \cap S(P_s,y_s)$ is bounded by $2 \pi$, we get
\begin{eqnarray*}
0 &\leq& \int_{S(P_1,y_1)\cap \dots \cap S(P_s,y_s)} -\max_{1\leq i \leq s}\{\log |P_i(x)|\} \frac{dx}{x}\\
&\leq &(2\pi)^{1-\frac{1}{s}} \left(\int_{S(P_1,y_1)\cap \dots \cap S(P_s,y_s)} \left(-\max_{1\leq i \leq s}\{\log |P_i(x)|\}\right)^s \frac{dx}{x}\right)^\frac{1}{s}\\
&\leq & (2\pi)^{1-\frac{1}{s}} \left( C_{k_1} I_{s,k_1}(y_1)\cdots  C_{k_s} I_{s,k_s}(y_s) \right)^\frac{1}{s^2}.
\end{eqnarray*}
\end{proof}

Again, we let $y_1=\dots=y_s=y$ and $y \rightarrow 0$ and we conclude the following result. 
\begin{cor}\label{caplimit}Let $P_1(x),\dots, P_s(x) \in \C[x]$ be polynomials having $k_1,\dots,k_s$ nonzero complex coefficients. Let $0<y \leq 1$. As $y$ approaches 0, we obtain
\[\lim_{y \rightarrow 0} \int_{S(P_1,y)\cap \dots \cap S(P_s,y)} \max_{1\leq i \leq s}\{\log |P_i(x)|\} \frac{dx}{x}=0,\]
where the speed of convergence is independent of the polynomials $P_1(x),\dots,P_s(x)$. 
\end{cor}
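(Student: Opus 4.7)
The plan is essentially immediate from the preceding lemma: specialize to $y_1=\cdots=y_s=y$ and let $y\to 0$. The lemma then bounds the absolute value of the integral by
\[(2\pi)^{1-1/s}\bigl(C_{k_1}I_{s,k_1}(y)\cdots C_{k_s}I_{s,k_s}(y)\bigr)^{1/s^2},\]
and by Corollary \ref{limI} each factor $I_{s,k_i}(y)$ tends to $0$ as $y\to 0$. Hence the right-hand side vanishes, which settles the limit portion of the claim.

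The uniformity assertion --- that the speed of convergence is independent of the $P_i$ --- falls out of the same inequality by inspection: every quantity on the right is either universal, a function of $s$ alone, the constant $C_{k_i}$ from Theorem \ref{thm1L} (which depends only on $k_i$, not on the coefficients of $P_i$), or the explicit function $I_{s,k_i}(y)$. Nothing in the bound sees $P_i$ beyond its count of nonzero monomials, so the rate is genuinely uniform over the family of polynomials having the prescribed $k_i$.

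The only technical matter is the hypothesis mismatch with the preceding lemma, which assumes that each $P_i$ has all nonzero coefficients of modulus $\geq 1$, whereas the corollary makes no such assumption. I would repair this exactly as in the proof of Lemma \ref{Lemma}: factor out the leading coefficient $a_i$ of $P_i$ to obtain a monic $a_i^{-1}P_i$ with the same $k_i$. Replacing $P_i$ by $a_i^{-1}P_i$ shifts $\log|P_i|$ by $\log|a_i|$, and on the intersection $S(P_1,y)\cap\cdots\cap S(P_s,y)$ this contributes at most $\max_i|\log|a_i||$ times $\mu_1(S(P_1,y)\cap\cdots\cap S(P_s,y))$, which is $O(y^{1/(k_i-1)})$ by Theorem \ref{thm1L} and thus vanishes uniformly with $y$. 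I do not anticipate a real obstacle; the subtle point, and the only one worth attention, is keeping the uniformity claim honest --- verifying at every step that no polynomial-dependent constant sneaks into the bound.
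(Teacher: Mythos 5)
Your proposal is correct and is essentially the paper's own (one-line) proof: set $y_1=\cdots=y_s=y$ in the preceding lemma and invoke Corollary \ref{limI}. The only caveat concerns your normalization repair: dividing out the leading coefficient shifts the bound by a term involving $\log|a_i|$, which is polynomial-dependent and would therefore undercut the stated uniformity of the rate; the paper instead simply carries the modulus-$\geq 1$ hypothesis over implicitly from the preceding lemma (and secures it in the application via the ``without loss of generality'' normalization in the proof of Theorem \ref{superBL}).
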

Observe that when $k_i=1$, the previous result is trivially true since the set $S(P_i,y)$ becomes empty for $y$ sufficiently small. 

\begin{remark}\label{superrem} Results analogous to Corollary \ref{cuplimit} and Corollary \ref{caplimit}  can be proved for the case where $P_1(x_1,\dots,x_n),\dots,P_s(x_1,\dots,x_n)$ are {\em fixed} polynomials in $\C[x_1,\dots,x_n]$.   
\end{remark}

\section{Proof of Theorem \ref{superBL}}
We begin by first proving that the extended versions of Mahler measures always exist (i.e., that the integrals always converge).
 This was used repeatedly in previous works but the details have never been written and we include them here for completeness. 
\begin{thrm}\label{existence} Let $P_1(x_1,\dots,x_n),\dots,P_s(x_1,\dots,x_n) \in \C[x_1,\dots,x_n]$ nonzero polynomials. Then the integrals giving the generalized Mahler measure and the multiple Mahler measure converge, i.e.,
\begin{enumerate}
\item \[|\m_{\max}(P_1,\dots,P_s)|<\infty,\]
\item  \[|\m({P_1},\dots, {P_s})|<\infty.\]
\item If $P_1=\cdots =P_s=P$, \[ \m_s(P)<\infty.\] 
 \end{enumerate}
\end{thrm}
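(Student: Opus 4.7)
The plan is to reduce all three statements to a single-polynomial estimate: for any $P \in \C[x_1,\dots,x_n]$ and any positive integer $s$, the integral
\[
\int_{\TT^n} |\log|P(x_1,\dots,x_n)||^s\,\frac{dx_1}{x_1}\cdots\frac{dx_n}{x_n}
\]
is finite. Once this is known, part (2) follows from the generalized H\"older inequality of Lemma \ref{superHolder}, which bounds $\int_{\TT^n}|\log|P_1|\cdots\log|P_s||\,\frac{dx_1}{x_1}\cdots\frac{dx_n}{x_n}$ by $\prod_{i=1}^s \bigl(\int_{\TT^n}|\log|P_i||^s\,\frac{dx_1}{x_1}\cdots\frac{dx_n}{x_n}\bigr)^{1/s}$; part (3) is the specialization $P_1=\cdots=P_s=P$ of part (2); and part (1) follows from the pointwise bound $\bigl|\max_i\log|P_i|\bigr|\leq \sum_i |\log|P_i||$ combined with the single-polynomial estimate applied with exponent $1$ to each $P_i$.

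To prove the single-polynomial estimate, I would fix a threshold $y_0\in(0,1)$ small enough that the Everest--Ward bound (\ref{multi}) holds for $P$, and split $\TT^n$ into $S_n(P,y_0)$ and its complement. On the complement one has $|P|\geq y_0$, hence $|\log|P||\leq \max\{|\log y_0|,\log\|P\|_\infty\}$, where $\|P\|_\infty=\sup_{\TT^n}|P|$ is finite by compactness; this gives a bounded integrand on a set of finite measure, so a finite contribution. On $S_n(P,y_0)$ we have $|P|<y_0<1$, so $|\log|P||^s=(-\log|P|)^s$, and the multivariate form of Lemma \ref{Lemma} (its second statement) bounds this contribution by $C_P\,J_{s,\delta_P}(y_0)$, which is finite by Lemma \ref{integral}.

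The main potential obstacle is ensuring that the multivariate form of Lemma \ref{Lemma} is valid for arbitrary exponent $\ell=s$, not only for $\ell=1$; but the statement is formulated for every positive integer $\ell$, and its integration-by-parts proof goes through verbatim in the multivariate setting using (\ref{multi}) in place of Theorem \ref{thm1L}. Every other step is a routine recombination of the tools already assembled in Section~3; in fact, as Remark \ref{superrem} indicates, the multivariate analogues of Corollaries \ref{cuplimit} and \ref{caplimit} are available and contain the required finiteness as a byproduct, so no new technology beyond what has been set up is needed.
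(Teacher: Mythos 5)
Your argument is correct, but it is organized quite differently from the paper's. The paper proves each part by splitting $\TT^n$ according to where \emph{all} (for part (1)) or \emph{at least one} (for part (2)) of the $|P_i|$ are small --- that is, into $\bigcap_i S_n(P_i,y)$, respectively $\bigcup_i S_n(P_i,y)$, and their complements --- and then invokes the multivariate analogues (via Remark \ref{superrem}) of Corollaries \ref{caplimit} and \ref{cuplimit}, whose proofs already contain the combinatorial decomposition of the union into disjoint pieces and the H\"older step of Lemma \ref{intersection}. You instead apply H\"older (Lemma \ref{superHolder}) globally on $\TT^n$ at the outset for part (2), and the crude pointwise bound $\left|\max_i \log|P_i|\right|\le\sum_i\left|\log|P_i|\right|$ for part (1), reducing everything to the finiteness of $\int_{\TT^n}\left|\log|P|\right|^s$ for a single polynomial; that in turn follows from the multivariate half of Lemma \ref{Lemma} together with inequality (\ref{multi}) on $S_n(P,y_0)$, plus boundedness on the complement. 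This is a leaner route for the existence statement: it bypasses the union/intersection combinatorics of Lemmas \ref{intersection} and \ref{cup} entirely and leans only on a result the paper states explicitly in the multivariate setting, rather than on the unproved Remark \ref{superrem}. The paper's heavier machinery is there because it is needed anyway for the uniform-in-${\bf r}$ estimates in the proof of Theorem \ref{superBL}, so the authors simply reuse it here; for mere finiteness your triangle-inequality bound in part (1), though cruder than the paper's observation that $\max_i\log|P_i|$ is only unbounded where all the $P_i$ are simultaneously small, is entirely sufficient.
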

\begin{proof}
(1) Let $y>0$. We write
\begin{eqnarray*}&&\int_{\TT^n}\max_{1\leq i \leq s}\{\log|P_i(x_1,\dots,x_n)|\}\frac{dx_1}{x_1}\cdots \frac{dx_n}{x_n} \\&&= \int_{{S_n(P_1,y)\cap \dots \cap S_n(P_s,y)}}\max_{1\leq i \leq s}\{\log|P_i(x_1,\dots,x_n)|\} \frac{dx_1}{x_1}\cdots \frac{dx_n}{x_n}\\
&&+ \int_{{S(P_1,y)^c\cup \dots \cup S(P_s,y)^c}}\max_{1\leq i \leq s}\{\log|P_i(x_1,\dots,x_n)|\}\frac{dx_1}{x_1}\cdots \frac{dx_n}{x_n}. 
\end{eqnarray*}
The second integral converges, while the first integral approaches 0 as $y \rightarrow 0$ by Corollary \ref{caplimit} and Remark \ref{superrem}. Therefore, the integral on the left converges. 

(2)  For $y>0$, we consider
\begin{eqnarray*}
&&\int_{\TT^n} \log |P_1(x_1,\dots,x_n)| \dots \log |P_s(x_1,\dots,x_n)| \frac{dx_1}{x_1}\cdots \frac{dx_n}{x_n}\\ &&= \int_{S_n(P_1,y)\cup \dots \cup S_n(P_s,y)} \log |P_1(x_1,\dots,x_n)| \dots \log |P_s(x_1,\dots,x_n)| \frac{dx_1}{x_1}\cdots \frac{dx_n}{x_n}\\
&&+\int_{S_n(P_1,y)^c\cap \dots \cap S_n(P_s,y)^c} \log |P_1(x_1,\dots,x_n)| \dots \log |P_s(x_1,\dots,x_n)| \frac{dx_1}{x_1}\cdots \frac{dx_n}{x_n}.
\end{eqnarray*}
As before, the second integral converges, while the first integral approaches 0 as $y \rightarrow 0$ by the Corollary \ref{cuplimit} and Remark \ref{superrem}. Thus, the first integral converges.

(3) This statement is a particular case of (2). 
\end{proof}

\begin{proof}[Theorem \ref{superBL}]
(1) Following \cite{L}, we define $F: \TT^n \rightarrow \R$ by $F(\omega)=-\max_{1\leq i \leq s}\{\log |P_i(\omega)|\}$
for $\omega \in \TT^n$. It suffices to prove that 
\[\lim_{q(\bf{r})\rightarrow \infty} \left| \int_\TT F_{\bf{r}} -\int_{\TT^n} F \right| = 0.\] 
Without loss of generality, we may assume that each coefficient of $P_i$ has modulus $\geq 1$, 
and therefore the same is true for $P_{i,\bf{r}}$ for $q(\bf{r})$ sufficiently large. For any $0\leq  y \leq 1$ we construct a continuous function
$g_{y} : \TT^n \rightarrow \R$ such that $0\leq g_{y}(\omega) \leq 1$ for all $\omega \in \TT^n$, $g_{y}(\omega)=1$ for $\max_{1\leq i \leq s}\{|P_i(\omega)|\}\geq y$, and 
$g_{y}(\omega)=0$ for $\max_{1\leq i \leq s}\{|P_i(\omega)|\}\leq \frac{1}{2}y$. Therefore, $g_y F_{\bf{r}}$ is a continuous function on $\TT^n$ for $0\leq  y \leq 1$. Since
$F=g_yF+(1-g_y)F$, the triangle inequality implies that  

\begin{align} \label{sumgen}
& \limsup_{q(\bf{r})\rightarrow \infty} \left| \int_\TT F_{\bf{r}} -\int_{\TT^n} F\right| 
\leq \limsup_{q(\bf{r})\rightarrow \infty}  \left| \int_\TT [g_y F]_{\bf{r}} 
-\int_{\TT^n} g_y F\right|  \\
\nonumber &+\limsup_{q(\bf{r})\rightarrow \infty}  \left| \int_\TT \left[\left(1-g_{y}\right) F \right]_{\bf{r}}\right|+ \limsup_{q(\bf{r})\rightarrow \infty}  \left| \int_{\TT^n} \left(1-g_{y}\right) F \right| 
\end{align}
Now, by the Weierstrass approximation theorem, the first term goes to zero since $g_{y}F$  is continuous on $\TT^n$.
The function $[(1-g_y) F]_{\bf{r}} = (1-g_{y,{\bf{r}}}) F_{\bf{r}}$ vanishes
 in the set $\bigcup S(P_{i,\bf{r}},y)^c=\left(\bigcap S(P_{i,\bf{r}},y)\right)^c$ and it is bounded below by 0 and above by
$F_{\bf{r}} $ in $\bigcap S(P_{i,\bf{r}},y)$. This implies
\[0\leq \limsup_{q(\bf{r})\rightarrow \infty}  \left| \int_\TT \left[\left(1-g_{y}\right)F \right]_{\bf{r}}\right|\leq \limsup_{q(\bf{r})\rightarrow \infty}  \left| \int_{\bigcap S(P_{i,\bf{r}},y)} F_{{\bf{r}}}\right|,\]
which goes to zero as $y\rightarrow 0$ by Corollary \ref{caplimit}.
Finally, the third term in \eqref{sumgen} tends to 0  as $y \rightarrow 0$ since $F$ is integrable 
over $\TT^n$ by Theorem \ref{existence} (1).

Thus, $\limsup_{q(\bf{r})\rightarrow \infty} \left| \int_\TT F_{\bf{r}} -\int_{\TT^n} F\right|=0$ since it is independent of $y$ and tends to zero as $y \rightarrow 0$. 

(2) We proceed as before. We define $F: \TT^n \rightarrow \R$ by $F(\omega)=\prod_{i=1}^s(-\log |P_i(\omega)|)$
for $\omega \in \TT^n$. 
Without loss of generality, we may assume that each coefficient of $P_i$ has modulus $\geq 1$, 
and therefore the same is true for $P_{i,\bf{r}}$ for $q(\bf{r})$ sufficiently large. For any $0\leq y \leq 1$ we construct a continuous function
$g_{y} : \TT^n \rightarrow \R$ such that $0\leq g_{y}(\omega) \leq 1$ for all $\omega \in \TT^n$, $g_{y}(\omega)=1$ if $|P_i(\omega)|\geq y$ for all $i$, and 
$g_{y}(\omega)=0$ if there is an $i$ such that $|P_i(\omega)|\leq \frac{1}{2}y$. Therefore, $g_{y}F$ is a continuous function on $\TT^n$ for $0\leq y \leq 1$. The triangle inequality implies that  

\begin{align}\label{sumhigh}
& \limsup_{q(\bf{r})\rightarrow \infty}  \left| \int_\TT F_{\bf{r}} -\int_{\TT^n} F\right| \leq \limsup_{q(\bf{r})\rightarrow \infty}  \left| \int_\TT [g_{y}F]_{\bf{r}} -\int_{\TT^n} g_{y}F\right|  \\
\nonumber &+\limsup_{q(\bf{r})\rightarrow \infty}  \left| \int_\TT \left[\left(1-g_{y}\right)F\right]_{\bf{r}}\right|+ \limsup_{q(\bf{r})\rightarrow \infty}  
\left| \int_{\TT^n} \left(1-g_{y}\right)F\right| 
\end{align}
The Weierstrass approximation theorem implies that the first term goes to zero since $g_{y}F$  is continuous on $\TT^n$.
Now the function $\left[\left(1-g_{y}\right)F\right]_{\bf{r}}=
\left(1-g_{y,{\bf{r}}}\right)F_{{\bf{r}}}$ vanishes in the set $\bigcap S(P_{i,\bf{r}},y)^c=\left(\bigcup S(P_{i,\bf{r}},y)\right)^c$ and it is bounded below by 0 and above by $F_{{\bf{r}}}$ in $\bigcup S(P_{i,\bf{r}},y)$. Combining all of this, 
\[0\leq \limsup_{q(\bf{r})\rightarrow \infty}  \left| \int_\TT \left[\left(1-g_{y}\right)F\right]_{\bf{r}}\right| \leq \limsup_{q(\bf{r})\rightarrow \infty}  \left| \int_{\bigcup S(P_{i,\bf{r}},y)} F_{\bf{r}}\right|.\]
The term in the right goes to zero as $y\rightarrow 0$ by Corollary \ref{cuplimit}. The third term in \eqref{sumhigh} tends to 0 as $y \rightarrow 0$ since $F$ is integrable 
over $\TT^n$ by Theorem \ref{existence} (2). 

Finally, $\limsup_{q(\bf{r})\rightarrow \infty} \left| \int_\TT F_{\bf{r}} -\int_{\TT^n} F\right| =0$ since it is independent of $y$ and tends to zero as $y \rightarrow 0$. 

(3) This case follows from (2) by setting $P_1=\dots=P_s=P$. This concludes the proof of the theorem.

\end{proof}

\proof[Acknowledgement] The authors are thankful to David Boyd for his encouragement and to the referee for helpful corrections.

\end{document}